\documentclass[a4 paper,11 pt,reqno]{amsart}
\usepackage{amsmath,hyperref,graphics}
\usepackage{amsfonts}
\usepackage{amsthm}
\usepackage{mathrsfs}  
\newtheorem{theorem}{Theorem}[section]
\newtheorem{lemma}[theorem]{Lemma}

\numberwithin{equation}{section} \makeatletter
\@namedef{subjclassname@2010}{ 2010 Mathematics Subject
Classification} \makeatother
\title[An inequality for the polar derivative of a polynomial]{On an inequality concerning the polar derivative of a polynomial with restricted zeros}
\author{N. A. Rather and Suhail Gulzar}
 \address{Department of Mathematics \\
    University of Kashmir \\
   Srinagar, Hazratbal 190006
   \\ India}
 \email{dr.narather@gmail.com}
 \email{sgmattoo@gmail.com}
\date{}
\begin{document}
\subjclass[2010]{ 30A10, 30C10}
\keywords{ Polynomials; Inequalities in the complex domain; Polar derivative; Bernstein's inequality.}
\maketitle
\begin{abstract}
 \indent Let $D_\alpha P(z)=nP(z)+(\alpha-z)P^{\prime}(z)$ denote the polar derivative of a polynomial $P(z)$ of degree $n$ with respect to a point $\alpha\in\mathbb{C}.$ In this paper, we present a correct proof, independent of Laguerre's theorem, of an inequality concerning the polar derivative of a polynomial with restricted zeros recently formulated by K. K. Dewan, Naresh Singh, Abdullah Mir, [Extensions of some polynomial inequalities to the polar derivative, \emph{J. Math. Anal. Appl.,} \textbf{352} (2009) 807-815].
\end{abstract}
\section{Introduction}
If $P(z)$ is a polynomial of degree $n$ having all its zeros in $|z|\leq 1,$ it was shown by Tur\'{a}n \cite{t} that
\begin{equation}\label{2}
\underset{\left|z\right|=1}{\max}\left|P^{\prime}(z)\right|\geq \dfrac{n}{2}\,\underset{\left|z\right|=1}{\max}\left|P(z)\right|.
\end{equation}
The inequality \eqref{2} becomes equality for $P(z)=(z+1)^n.$\\
\indent As an extension of \eqref{2}, Malik \cite{m} proved that if all the zeros of polynomial $P(z)$ of degree $n$ lie in $|z|\leq k,\,k\leq 1,$ then
\begin{equation}\label{3}
\underset{\left|z\right|=1}{\max}\left|P^{\prime}(z)\right|\geq \dfrac{n}{1+k}\, \underset{\left|z\right|=1}{\max}\left|P(z)\right|. 
\end{equation}
\indent By considering the class of polynomials $P(z)=a_nz^n+\sum_{j=\mu}^{n}a_{n-j}z^{n-j},$ $1\leq\mu\leq n,$ of degree $n$ having all their zeros in $|z|\leq k,$ $k\leq 1,$ Aziz and Shah \cite{as97} proved
\begin{equation}\label{4}
\underset{\left|z\right|=1}{\max}\left|P^{\prime}(z)\right|\geq        \frac{n}{1+k^\mu}\left\{\underset{\left|z\right|=1}{\max}\left|P(z)\right|+\dfrac{1}{k^{n-\mu}}\underset{|z|=k}{\min}|P(z)|\right\}.
\end{equation}
\indent Let $D_\alpha P(z)$ denote the polar derivative of a polynomial $P(z)$ of degree $n$ with respect to a point $\alpha\in\mathbb{C},$ then (see \cite{marden})
$$D_\alpha P(z)=nP(z)+(\alpha-z)P^{\prime}(z). $$
The polynomial $D_\alpha P(z)$ is of degree at most $n-1$ and it generalizes the ordinary derivative in the sense that
$$\underset{\alpha\rightarrow\infty}{\lim}\dfrac{D_\alpha P(z)}{\alpha}=P^{\prime}(z) $$
uniformly with respect to $z$ for $|z|\leq R,\,R>0.$\\
\indent Dewan et al. \cite{dnr} (see also \cite{rm}) extended inequality \eqref{4} to polar derivative and proved that if $P(z)=a_nz^n+\sum_{j=\mu}^{n}a_{n-j}z^{n-j},$ $1\leq\mu\leq n,$ is a polynomial of degree $n$ having all its zeros in $|z|\leq k,$ $k\leq 1,$ then for every real and complex number $\alpha$ with $|\alpha|\geq k^\mu,$ 
\begin{equation}\label{6}
\underset{\left|z\right|=1}{\max}\left|D_\alpha P(z)\right|\geq        \frac{n\left(|\alpha|-k^\mu\right)}{1+k^\mu}\underset{\left|z\right|=1}{\max}\left|P(z)\right|+\frac{n\left(|\alpha|+1\right)}{k^{n-\mu}\left(1+k^\mu\right)}\underset{|z|=k}{\min}|P(z)|.
\end{equation}
\indent While seeking the desired refinement of inequality \eqref{6}, recently Dewan et al. \cite{dna} have made an incomplete attempt by claiming to have proved the following result.
\begin{theorem}\label{t1}
If $P(z)=a_nz^n+\sum_{j=\mu}^{n}a_{n-j}z^{n-j},$ $1\leq\mu\leq n,$ is a polynomial of degree $n$ having all its zeros in $|z|\leq k,$ $k\leq 1,$ then for every real or complex number $\alpha$ with $|\alpha|\geq k^\mu,$ 
 \begin{align}\nonumber\label{te1}
 \underset{\left|z\right|=1}{\max}\left|D_\alpha P(z)\right|\geq &       \frac{n\left(|\alpha|-k^\mu\right)}{1+k^\mu}\underset{\left|z\right|=1}{\max}\left|P(z)\right|+\frac{n\left(|\alpha|+1\right)}{k^{n-\mu}\left(1+k^\mu\right)}m\\
 & +n\left(\dfrac{k^\mu-A_\mu}{1+k^\mu}\right)\underset{\left|z\right|=1}{\max}|P(z)|+\dfrac{n(A_\mu-k^\mu)}{k^n(1+k^\mu)}m
 \end{align}
 where $m=\underset{|z|=k}{\min}|P(z)|$ and 
 \begin{equation}\label{amu}
 A_\mu=\frac{n\left(|a_n|-\frac{m}{k^n}\right)k^{2\mu}+\mu|a_{n-\mu}|k^{\mu-1}}{n\left(|a_n|-\frac{m}{k^n}\right)k^{\mu-1}+\mu|a_{n-\mu}|}.
 \end{equation}
\end{theorem}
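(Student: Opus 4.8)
The plan is to first reduce the four-term bound \eqref{te1} to the equivalent compact inequality
\[
\max_{|z|=1}|D_\alpha P(z)|\ge \frac{n(|\alpha|-A_\mu)}{1+k^\mu}\max_{|z|=1}|P(z)|+\frac{n(|\alpha|k^\mu+A_\mu)}{k^n(1+k^\mu)}m,
\]
which follows by collecting the two $\max|P|$-terms of \eqref{te1} and the two $m$-terms (using $m/k^{n-\mu}=k^\mu m/k^n$). I would then start from the decomposition $D_\alpha P(z)=\alpha P'(z)+\bigl(nP(z)-zP'(z)\bigr)$ together with the identity $|nP(z)-zP'(z)|=|Q'(z)|$ on $|z|=1$, where $Q(z)=z^n\overline{P(1/\bar z)}$ (this comes from $z^{n-1}\overline{Q'(z)}=nP(z)-zP'(z)$). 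The reverse triangle inequality then gives the pointwise bound $|D_\alpha P(z)|\ge|\alpha|\,|P'(z)|-|Q'(z)|$ on $|z|=1$, which will be the engine of the proof.

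Next I would record two minimum-modulus inputs. Applying the maximum-modulus principle to $k^nP(z)/(mz^n)$ on $|z|\ge k$ yields both $m\le|a_n|k^n$ (so that $|a_n|-m/k^n\ge 0$ and $A_\mu$ in \eqref{amu} is well defined and nonnegative) and the estimate $|P(z)|\ge m/k^n$ for $|z|\ge k$; in particular $\max_{|z|=1}|P(z)|\ge m/k^n$. Feeding this into \eqref{4} gives $\max_{|z|=1}|P'(z)|\ge nm/k^n$, which is exactly the positivity that makes the key estimate below usable at the extremal point. I would write $M'=\max_{|z|=1}|P'(z)|$ and fix a point $z_1$ on $|z|=1$ with $|P'(z_1)|=M'$.

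The heart of the matter is a sharp coefficient-wise Bernstein-type inequality for the gap class: if $R(z)=c_nz^n+\sum_{j=\mu}^{n}c_{n-j}z^{n-j}$ has all its zeros in $|z|\le k\le 1$ and $S(z)=z^n\overline{R(1/\bar z)}$, then on $|z|=1$
\[
|S'(z)|\le A(R)\,|R'(z)|,\qquad A(R)=\frac{n|c_n|k^{2\mu}+\mu|c_{n-\mu}|k^{\mu-1}}{n|c_n|k^{\mu-1}+\mu|c_{n-\mu}|},
\]
which I would prove directly from $|S'(z)|/|R'(z)|=|n-w|/|w|$ with $w=\sum_j z/(z-z_j)$, optimizing against the coefficient constraint $\mu|c_{n-\mu}|\le n|c_n|k^\mu$ forced by the location of the zeros; the same constraint shows $A(R)\le k^\mu$, and a short computation shows $A(R)$ is nonincreasing in $|c_n|$. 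I would then apply this to the perturbed polynomial $F(z)=P(z)-\beta m(z/k)^n$ with $|\beta|=1$: since $|\beta m(z/k)^n|=m\le|P(z)|$ on $|z|=k$, Rouché (plus continuity for the boundary case) keeps all zeros of $F$ in $|z|\le k$, the gap structure is preserved, $F$ has leading coefficient $a_n-\beta m/k^n$, and $S_F'=Q'$ because the perturbation only shifts $S_F$ by a constant.

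Finally I would combine everything at $z_1$. Choosing $\beta$ so that $\beta mnz_1^{n-1}/k^n$ points in the direction of $P'(z_1)$ makes $|F'(z_1)|=|P'(z_1)|-nm/k^n=M'-nm/k^n$ (legitimate because $M'\ge nm/k^n$), while $|a_n-\beta m/k^n|\ge|a_n|-m/k^n$ forces $A(F)\le A_\mu$ by the monotonicity above; hence $|Q'(z_1)|\le A(F)\,|F'(z_1)|\le A_\mu\bigl(M'-nm/k^n\bigr)$. Plugging this into the pointwise bound at $z_1$ gives $\max|D_\alpha P|\ge(|\alpha|-A_\mu)M'+nA_\mu m/k^n$, and since $A_\mu\le k^\mu\le|\alpha|$ the coefficient of $M'$ is nonnegative, so \eqref{4} may be substituted; a routine simplification collapses the result to the compact inequality above, hence to \eqref{te1}. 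The main obstacle is the sharp inequality of the third paragraph: obtaining the exact constant $A(R)$ (rather than the cruder $k^\mu$, which only reproduces \eqref{6}) requires a genuinely sharp optimization together with the monotonicity of the constant, and it is precisely this step where the earlier argument of Dewan et al. went astray by invoking Laguerre's theorem. The delicate bookkeeping is to let the single phase $\beta$ serve two purposes at $z_1$ at once — aligning the derivative correction to produce $M'-nm/k^n$ while simultaneously keeping $A(F)\le A_\mu$ — so that the two minimum-modulus contributions combine with the correct coefficients.
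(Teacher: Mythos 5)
Your argument is correct in substance and reaches the paper's intermediate compact inequality
\[
\underset{|z|=1}{\max}|D_\alpha P(z)|\ \ge\ \frac{n(|\alpha|-A_\mu)}{1+k^\mu}\underset{|z|=1}{\max}|P(z)|+\frac{n(|\alpha|k^\mu+A_\mu)}{k^n(1+k^\mu)}\,m,
\]
and it shares the paper's central ingredients: the Rouch\'e perturbation $P(z)-\lambda mz^n/k^n$, the sharp Aziz--Rather estimate $|Q'(z)|\le S_\mu|P'(z)|$ on $|z|=1$ (Lemma \ref{l1}), the monotonicity of $S_\mu(x)$ in the leading coefficient (Lemma \ref{l5}), and the strict bound $|a_n|>m/k^n$ (Lemma \ref{Mm'}). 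Where you genuinely diverge is in the endgame. The paper applies its multiplicative Lemma \ref{l2} to the perturbed polynomial, so the phase $\lambda$ ends up multiplying the correction to $D_\alpha P$ itself; to align that phase the paper must first prove Lemma \ref{l3}, the pointwise lower bound $|D_\alpha P(z)|\ge|\alpha|mn/k^n$ on $|z|=1$ (obtained via Gauss--Lucas). You instead confine the perturbation to the Bernstein step, align $\beta$ against $P'(z_1)$ at the maximum point of $|P'|$ (legitimate since $\max_{|z|=1}|P'|\ge nm/k^n$), deduce $|Q'(z_1)|\le A_\mu\bigl(M'-nm/k^n\bigr)$, and then import the Aziz--Shah inequality \eqref{4} to convert $M'$ into $\max|P|+m/k^{n-\mu}$. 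The trade is clean: you avoid Lemmas \ref{l2} and \ref{l3} entirely but lean on \eqref{4} as a black box, whereas the paper is self-contained modulo Lemma \ref{l1}; your version also isolates more transparently why the single phase causes no conflict, since $|a_n-\beta m/k^n|\ge|a_n|-m/k^n$ holds for every unimodular $\beta$.

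Two small repairs. First, your in-line sketch of the key lemma (``optimize $|n-w|/|w|$ against $\mu|c_{n-\mu}|\le n|c_n|k^\mu$'') is not a proof; the estimate with the exact constant $A(R)$ is nontrivial and you should simply cite Aziz and Rather \cite{ar04}, as the paper does in Lemma \ref{l1}. Second, the minimum-modulus principle gives $|P(z)|\ge m|z|^n/k^n$ for $|z|\ge k$, not $|P(z)|\ge m/k^n$ on that whole range; this does not affect you, since you only use the bound at $|z|=1$, where it reads $\max_{|z|=1}|P(z)|\ge m/k^n$ as required. Finally, taking $|\beta|=1$ outright requires either the continuity-of-zeros argument you mention or the paper's device of working with $|\lambda|<1$ and letting $|\lambda|\to 1$; either is fine, but one of them must be said explicitly.
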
 

 The proof of Theorem \ref{t1} given by Dewan et al. \cite{dna} is not correct. The reason being that the authors in \cite{dna}, deduce by using Lemma 7\cite{dna} between the lines 8 to 10, on page 814, that if $F(z)=P(z)-m\lambda z^n/k^n$ has all its zeros in $|z|<k, k\leq 1,$ then for every $\alpha\in\mathbb{C}$ with $|\alpha|\geq k^\mu,$ $1\leq \mu\leq n$ all the zeros of the polar derivative $D_\alpha F(z)$ also lie in $|z|<k$ which need not be true in general for $1\leq \mu\leq n.$ Here, Lemma 7 \cite{dna} is applicable only when $\mu=1.$ For $1<\mu\leq n$ the assertion is not true, since for $k<1,$ $k^\mu< k.$\\
  \indent The immediate counterexample $P(z)=4z^2-1,$ $\mu=2$ having all its zeros in $|z|<k=3/5<1$ demonstrates, by taking $\alpha=2/5> k^\mu$ that the zero of $D_\alpha P(z)=\frac{16z}{5}-2$ lie in $|z|>k=3/5.$\\
  \indent  The main aim of this paper is to present a correct proof, independent of Laguerre's theorem, of Theorem \ref{t1}.
\section{Lemmas}
For the proof of above theorem, we need the following lemmas. 
\begin{lemma}\label{l1}
If $P(z)=a_nz^n+\sum_{j=\mu}^{n}a_{n-j}z^{n-j},$ $1\leq\mu\leq n,$ is a polynomial of degree $n$ having all its zeros in $|z|\leq k,$ $k\leq 1$ and $Q(z)=z^n\overline{P(1/\overline{z})},$ then for $|z|=1$
\begin{equation}
|Q^{\prime}(z)|\leq S_\mu|P^{\prime}(z)|\,\left(\leq k^\mu|P^{\prime}(z)| \right)
\end{equation}
where 
\begin{equation}\label{smu}
 S_\mu=\frac{n|a_n|k^{2\mu}+\mu|a_{n-\mu}|k^{\mu-1}}{n|a_n|k^{\mu-1}+\mu|a_{n-\mu}|}
\end{equation}
and
\begin{equation*}
\frac{\mu}{n}\left|\dfrac{a_{n-\mu}}{a_n}\right|\leq k^\mu.
\end{equation*}
\end{lemma}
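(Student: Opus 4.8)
The plan is to reduce everything to a single pointwise inequality on the unit circle and then to exploit the gap (lacunary) structure of $P$. First I would record the boundary identity for the conjugate--reciprocal polynomial $Q(z)=z^n\overline{P(1/\overline z)}$. Differentiating gives $Q'(z)=nz^{n-1}\overline{P(1/\overline z)}-z^{n-2}\overline{P'(1/\overline z)}$; on $|z|=1$ one may substitute $1/\overline z=z$ and use $\overline z=1/z$ together with the invariance of the modulus under conjugation to obtain
\[
|Q'(z)|=|nP(z)-zP'(z)|,\qquad |z|=1.
\]
Hence the assertion $|Q'(z)|\le S_\mu|P'(z)|$ is equivalent to $|nP(z)-zP'(z)|\le S_\mu|P'(z)|$ on $|z|=1$. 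Since $P$ has all its zeros in $|z|\le k\le 1$, the Gauss--Lucas theorem places all zeros of $P'$ in $|z|\le k$ as well, so both sides are nonvanishing on $|z|=1$ (for $k<1$) and the ratio is well defined.

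Second, I would dispose of the side condition $\tfrac{\mu}{n}|a_{n-\mu}/a_n|\le k^\mu$, which is exactly what forces $S_\mu\le k^\mu$. Writing $P(z)=a_n\prod_{\nu=1}^n(z-z_\nu)$ with $|z_\nu|\le k$, the vanishing of the coefficients of $z^{n-1},\dots,z^{n-\mu+1}$ means the elementary symmetric functions satisfy $e_1=\cdots=e_{\mu-1}=0$, while $a_{n-\mu}/a_n=(-1)^\mu e_\mu$. Newton's identities then force the power sums to vanish up to order $\mu-1$ and give $p_\mu=\sum_\nu z_\nu^{\mu}=(-1)^{\mu-1}\mu\,e_\mu$, whence $\mu|a_{n-\mu}/a_n|=|p_\mu|\le\sum_\nu|z_\nu|^\mu\le nk^\mu$. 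This is precisely the stated constraint, and a one--line manipulation (clearing denominators and using $k\le1$) shows it is equivalent to $S_\mu\le k^\mu$; so the parenthetical bound follows automatically once the main inequality is in hand.

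Third comes the heart of the matter. Setting $g(z)=zP'(z)/P(z)=\sum_\nu z/(z-z_\nu)$, the identity $n-g=\sum_\nu\!\big(1-\tfrac{z}{z-z_\nu}\big)=-\sum_\nu \tfrac{z_\nu}{z-z_\nu}$ turns the target on $|z|=1$ into
\[
\Big|\sum_{\nu=1}^n\frac{z_\nu}{z-z_\nu}\Big|\le S_\mu\Big|\sum_{\nu=1}^n\frac{z}{z-z_\nu}\Big|,
\]
that is, the modulus of a weighted average of the zeros $z_\nu$ (with complex weights $1/(z-z_\nu)$) must not exceed $S_\mu$. Here the crude estimate $|z_\nu|\le k$ only yields the bound $k$, which is weaker than the required $S_\mu\le k^\mu$. \emph{This sharpening is the main obstacle}: the entire gain must come from the gap structure, i.e.\ from the vanishing symmetric functions used above, which is what brings the coefficient data $|a_n|,|a_{n-\mu}|$ into play. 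Concretely, $|nP-zP'|\le S_\mu|P'|$ on $|z|=1$ is equivalent to requiring the values of $g$ on $|z|=1$ to lie in the Apollonius disk $\{\,|w-n/(1-S_\mu^2)|\le nS_\mu/(1-S_\mu^2)\,\}$, whose leftmost point is $n/(1+S_\mu)$. I therefore expect to establish this containment by a refined Bernstein--Turán estimate for lacunary polynomials, controlling $\mathrm{Re}\,g$ and $|g|$ on $|z|=1$ by $n/(1+S_\mu)$-type quantities rather than by the naive $n/(1+k)$, feeding in the vanishing of $e_1,\dots,e_{\mu-1}$ to upgrade $k$ to $S_\mu$. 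This is the step I anticipate to be the most delicate, and the only place where the coefficient--dependent constant is genuinely required.
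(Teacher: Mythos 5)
Your proposal does not actually prove the lemma: the central assertion $|Q'(z)|\le S_\mu|P'(z)|$ on $|z|=1$ is only restated in an equivalent form and then deferred. The reduction to $|nP(z)-zP'(z)|\le S_\mu|P'(z)|$ is correct, and the translation into the requirement that $w=zP'(z)/P(z)$ lie in the Apollonius disk $\{\,|w-n/(1-S_\mu^2)|\le nS_\mu/(1-S_\mu^2)\,\}$ is a legitimate reformulation, but at that point you write that you ``expect to establish this containment by a refined Bernstein--Tur\'an estimate'' and stop. That containment \emph{is} the lemma; the naive zero-by-zero bound $\mathrm{Re}\,\frac{z}{z-z_\nu}\ge\frac{1}{1+|z_\nu|}$ only yields the constant $k$, and upgrading $k$ to the coefficient-dependent $S_\mu$ is precisely the nontrivial content that must be supplied. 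As it stands the argument has a hole exactly at the one step you yourself identify as ``the heart of the matter.'' Note that the paper does not prove this lemma either: it is quoted from Aziz and Rather \cite{ar04}, so an acceptable treatment would either reproduce that proof or cite it explicitly rather than sketch a hoped-for strategy.

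On the positive side, your handling of the subsidiary claim is correct and essentially self-contained: the vanishing of the coefficients of $z^{n-1},\dots,z^{n-\mu+1}$ gives $e_1=\dots=e_{\mu-1}=0$, Newton's identities then yield $p_\mu=(-1)^{\mu-1}\mu e_\mu$, hence $\mu|a_{n-\mu}/a_n|=|p_\mu|\le nk^\mu$; and clearing denominators in $S_\mu\le k^\mu$ reduces it (using $k\le 1$, so the inequality reverses when dividing by $k-1$) to exactly $n|a_n|k^{\mu}\ge\mu|a_{n-\mu}|$. So the parenthetical bound $S_\mu|P'(z)|\le k^\mu|P'(z)|$ and the displayed coefficient inequality are fine. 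One minor inaccuracy: your claim that ``both sides are nonvanishing on $|z|=1$'' is justified for $|P'|$ by Gauss--Lucas, but $nP(z)-zP'(z)$ (i.e.\ $Q'$ up to a unimodular factor) can vanish on $|z|=1$; this does not affect the inequality, but the parenthetical remark about the ratio being well defined should be dropped or qualified.
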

The above lemma is due to Aziz and Rather \cite{ar04}. 
\begin{lemma}\label{l2}
Let $P(z)=a_nz^n+\sum_{j=\mu}^{n}a_{n-j}z^{n-j},$  $1\leq\mu\leq n,$ be a polynomial of degree $n$ having all its zeros in $|z|\leq k,$ $k\leq 1,$ then for every real or complex number $\alpha$ with $|\alpha|\geq S_\mu,$ 
\begin{align}\label{l1e}
|D_\alpha P(z)|\geq n\left(\frac{|\alpha|-S_\mu}{1+k^\mu}\right)|P(z)|\qquad for\quad|z|=1.
\end{align}
\end{lemma}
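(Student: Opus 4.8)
The plan is to reduce the whole statement to a pointwise estimate on $|z|=1$ and then feed it to Lemma \ref{l1}, avoiding any discussion of the location of the zeros of $D_\alpha P$. Writing $Q(z)=z^n\overline{P(1/\overline z)}$, the first step is to record the standard conjugate identity
$$Q'(z)=z^{n-1}\,\overline{\big(nP(z)-zP'(z)\big)}\qquad(|z|=1),$$
which I would verify by comparing coefficients: if $P(z)=\sum_j a_j z^j$, then $nP(z)-zP'(z)=\sum_j (n-j)a_j z^j$ while $Q(z)=\sum_j \overline{a_j}z^{n-j}$ gives $Q'(z)=\sum_j (n-j)\overline{a_j}z^{n-1-j}$, and the two expressions match after conjugating the former and multiplying by $z^{n-1}$ on $|z|=1$. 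In particular this yields the modulus identity $|nP(z)-zP'(z)|=|Q'(z)|$ there.

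The second step splits the polar derivative as $D_\alpha P(z)=nP(z)+(\alpha-z)P'(z)=\big(nP(z)-zP'(z)\big)+\alpha P'(z)$ and applies the triangle inequality in the form $|D_\alpha P(z)|\ge |\alpha|\,|P'(z)|-|nP(z)-zP'(z)|$. Using Step 1 and then Lemma \ref{l1}, this becomes, for $|z|=1$,
$$|D_\alpha P(z)|\ge |\alpha|\,|P'(z)|-|Q'(z)|\ge (|\alpha|-S_\mu)\,|P'(z)|,$$
which is nonnegative precisely because of the hypothesis $|\alpha|\ge S_\mu$.

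The third step supplies a pointwise lower bound for $|P'(z)|$ in terms of $|P(z)|$. From $nP(z)=zP'(z)+\big(nP(z)-zP'(z)\big)$ and the modulus identity of Step 1, on $|z|=1$ one gets $n|P(z)|\le |P'(z)|+|Q'(z)|$; a second application of Lemma \ref{l1}, together with $S_\mu\le k^\mu$, gives $n|P(z)|\le(1+k^\mu)|P'(z)|$, i.e. $|P'(z)|\ge \tfrac{n}{1+k^\mu}|P(z)|$. Combining this with Step 2 produces $|D_\alpha P(z)|\ge \tfrac{n(|\alpha|-S_\mu)}{1+k^\mu}|P(z)|$ on $|z|=1$, which is the assertion.

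The only delicate point is the direction of the triangle inequality and the accompanying sign condition: the bound $|D_\alpha P|\ge |\alpha|\,|P'|-|Q'|$ is informative only when $|\alpha|-S_\mu\ge 0$, so the hypothesis $|\alpha|\ge S_\mu$ is exactly what makes both the factor in Step 2 and the final constant nonnegative. Everything else is the routine conjugate-symmetry computation for $Q$ plus two invocations of Lemma \ref{l1}; crucially, no appeal to Laguerre's theorem or to the zeros of $D_\alpha P$ is required, which is what enables the later correction of Theorem \ref{t1}.
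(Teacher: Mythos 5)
Your proof is correct and follows essentially the same route as the paper's: the same decomposition $D_\alpha P(z)=\big(nP(z)-zP'(z)\big)+\alpha P'(z)$, the same identification $|nP(z)-zP'(z)|=|Q'(z)|$ on $|z|=1$, and the same two invocations of Lemma \ref{l1} to obtain first $|D_\alpha P(z)|\ge(|\alpha|-S_\mu)|P'(z)|$ and then $|P'(z)|\ge\tfrac{n}{1+k^\mu}|P(z)|$. The only difference is that you verify the conjugate identity for $Q'$ explicitly by comparing coefficients, where the paper states it as easily verified.
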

\begin{proof}
If $Q(z)=z^n\overline{P(1/\overline{z})},$ then $P(z)=z^n\overline{Q(1/\overline{z})}$ and it can be easily verified that for $|z|=1$
\begin{align*}
|Q^{\prime}(z)|&=|nP(z)-zP^{\prime}(z)|\\&\geq |nP(z)|-|zP^{\prime}(z)|.
\end{align*}
Equivalently, we have
\begin{equation}\label{le1}
|Q^{\prime}(z)|+|P^{\prime}(z)|\geq n|P(z)| \,\,\,\,\,\textnormal{for} \,\,\,\,\,\,\,|z|=1.
\end{equation}
This implies with the help of Lemma \ref{l1} and inequality \eqref{le1}, that
\begin{align*}
\left(1+k^\mu\right)|P^{\prime}(z)|&=|P^{\prime}(z)|+k^\mu|P^{\prime}(z)|\\&\geq|P^{\prime}(z)|+|Q^{\prime}(z)|\\&\geq n|P(z)|,
\end{align*}
which implies 
\begin{equation}\label{le2}
\left|P^{\prime}(z)\right|\geq\dfrac{n}{1+k^\mu}\left|P(z)\right|\,\,\,\,\textnormal{for}\,\,\,\,\,\,|z|=1.
\end{equation}
Now, for every real or complex number $\alpha$ with $|\alpha|\geq S_\mu,$
\begin{align}\nonumber\label{z1}
|D_\alpha P(z)|&=|nP(z)+(\alpha-z)P^{\prime}(z)|\\\nonumber
&\geq|\alpha||P^{\prime}(z)|-|nP(z)-zP^{\prime}(z)|\\&
=|\alpha||P^{\prime}(z)|-|Q^{\prime}(z)|\qquad\textnormal{for}\quad|z|=1.
\end{align}
Combing inequalities \eqref{le2}, \eqref{z1} and Lemma \ref{l1}, we get for $|z|=1,$
\begin{align*}
|D_\alpha P(z)|\geq&\left(|\alpha|-S_\mu\right)|P^{\prime}(z)|\\
\geq& n\left(\dfrac{|\alpha|-S_\mu}{1+k^\mu}\right)\left|P(z)\right|
\end{align*}
This proves Lemma \ref{l2}.
\end{proof}
\begin{lemma}\label{Mm'}
If $P(z)=\sum_{j=1}^{n}a_jz^j$ is a polynomial of degree $n$ having all its zeros in $|z|\leq k,$ $k\leq 1$ and $m={\min}_{|z|=k}|P(z)|,$ then 
\begin{equation}\label{Mm}
\underset{|z|=1}{\max}|P(z)|\geq \dfrac{m}{k^n}
\end{equation}
and in particular,
\begin{equation}\label{Mm''}
|a_n|>\frac{m}{k^n}.
\end{equation}
\end{lemma}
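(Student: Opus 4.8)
The plan is to exploit the minimum modulus hypothesis on $|z|=k$ through the maximum modulus principle applied to a suitable reciprocal function. First I would dispose of the trivial case $m=0$: here $\max_{|z|=1}|P(z)|\ge 0=m/k^{n}$ holds automatically, and since $P$ has degree $n$ we have $a_n\neq 0$, so $|a_n|>0=m/k^{n}$; thus both assertions hold. Hence I may assume $m>0$. Because $|P(z)|\ge m>0$ on $|z|=k$, the polynomial $P$ has no zero on $|z|=k$, and since all its zeros lie in $|z|\le k$ they in fact lie in the open disc $|z|<k$.

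Next I would introduce the auxiliary function
\[
g(z)=\frac{m\,z^{n}}{k^{n}P(z)}.
\]
Since every zero of $P$ lies in $|z|<k$, the function $g$ is analytic in the region $|z|\ge k$, including the point at infinity, where $g(\infty)=m/(k^{n}a_n)$. On the boundary circle $|z|=k$ we have $|z|^{n}=k^{n}$ and $|P(z)|\ge m$, so $|g(z)|=m/|P(z)|\le 1$. To make the maximum principle rigorous on this unbounded set, I would transfer to the variable $w=1/z$: then $g(1/w)=m/\bigl(k^{n}w^{n}P(1/w)\bigr)$, whose denominator is the reversed polynomial with constant term $a_n\neq 0$ and with all its zeros in $|w|>1/k$. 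Thus $g(1/w)$ is analytic on the closed disc $|w|\le 1/k$ and bounded by $1$ on its boundary, and the maximum modulus principle gives $|g(z)|\le 1$ for all $|z|\ge k$.

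Now I would read off the two conclusions. Since $k\le 1$, the circle $|z|=1$ lies in the region $|z|\ge k$, so for $|z|=1$ the bound $|g(z)|\le 1$ reads $m/(k^{n}|P(z)|)\le 1$, i.e. $|P(z)|\ge m/k^{n}$; taking the maximum over $|z|=1$ yields \eqref{Mm}. For the coefficient inequality, letting $z\to\infty$ (equivalently evaluating the transplanted function at $w=0$) gives $m/(k^{n}|a_n|)\le 1$, hence $|a_n|\ge m/k^{n}$.

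Finally, the one delicate point is the strictness in \eqref{Mm''}, and this is the step I expect to require the most care. Equality $|a_n|=m/k^{n}$ would force $|g|$ to attain its maximal boundary value $1$ at the interior point $z=\infty$ (that is, at $w=0$), so by the equality case of the maximum modulus principle $g$ would be a constant of modulus $1$; this happens precisely when $P(z)=a_nz^{n}$ is a monomial. Ruling out this degenerate case, the inequality is strict and $|a_n|>m/k^{n}$, which together with the $m=0$ case treated at the outset completes the argument.
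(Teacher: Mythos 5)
Your proof is correct and takes essentially the same route as the paper: the paper applies the minimum modulus principle to $Q(z)=z^n\overline{P(1/\overline{z})}$ on the disc $|z|\leq 1/k$ and reads off \eqref{Mm} on $|z|=1$ and \eqref{Mm''} at $z=0$, which is exactly your maximum modulus argument for $g(1/w)$ after taking reciprocals and normalizing by $m/k^n$. The one delicate point you flag is real: equality $|a_n|=m/k^n$ occurs precisely for the monomial $P(z)=a_nz^n$, which the stated hypotheses do \emph{not} exclude, so the ``ruling out'' step cannot actually be carried out; but the paper's own proof has exactly the same unaddressed equality case (it asserts $|Q(0)|>\min_{|z|=1/k}|Q(z)|$, which requires $Q$ to be nonconstant), so on this point you are, if anything, more careful than the source.
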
 
\begin{proof}
Since the polynomial $P(z)$ has all its zeros in $|z|\leq k,$ $k\leq 1,$ the polynomial $Q(z)=z^n\overline{P(1/\overline{z})}$ has no zero in $|z|<1/k,$ $1/k\geq 1.$ We assume that $Q(z)$ has no zero on $|z|=1/k,$ for otherwise the result holds trivially. Since $Q(z)$ has no zeros in $|z|\leq 1/k,$ by Minimum Modulus Principle
\begin{equation*}
|Q(z)|\geq \underset{|z|=1/k}{\min}|Q(z)|\,\,\,\,\textnormal{for} \,\,\,\,\ |z|\leq 1/k\,\,\,\textnormal{where}\,\,\,\,\, 1/k\geq 1,
\end{equation*}
which in particular gives,
\begin{equation*}
|Q(z)|\geq \dfrac{1}{k^n}\underset{|z|=k}{\min}|P(z)|\,\,\,\,\textnormal{for} \,\,\,\,\ |z|\leq 1\,\,\,\,\ \textnormal{and}\,\,\,\, |a_n|=|Q(0)|>\dfrac{1}{k^n}\underset{|z|=k}{\min}|P(z)|.
\end{equation*}
That is,
\begin{equation*}
\underset{|z|=1}{\max}|P(z)|=\underset{|z|=1}{\max}|Q(z)|\geq \dfrac{m}{k^n}\,\,\,\,\textnormal{and}\,\,\,\,\,|a_n|>\dfrac{m}{k^n}.
\end{equation*}
This completes the proof of Lemma \ref{Mm'}..
\end{proof}
\begin{lemma}\label{l5}
The function
\begin{equation}
S_\mu(x)=\frac{nxk^{2\mu}+\mu|a_{n-\mu}|k^{\mu-1}}{nxk^{\mu-1}+\mu|a_{n-\mu}|}.
\end{equation}
where $k\leq 1$ and $\mu\geq 1,$ is a non-increasing function of $x.$
\end{lemma}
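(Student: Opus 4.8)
The plan is to regard $S_\mu(x)$ as a linear fractional (Möbius) function of the single variable $x$ and to show that its derivative is non-positive. Writing $a=nk^{2\mu}$, $b=\mu|a_{n-\mu}|k^{\mu-1}$, $c=nk^{\mu-1}$ and $d=\mu|a_{n-\mu}|$, we have $S_\mu(x)=(ax+b)/(cx+d)$, where all four constants are non-negative and, for the range $x\ge 0$ relevant to the application (recall that $x$ will be $|a_n|$ or $|a_n|-m/k^n$, which are positive by Lemma \ref{Mm'}), the denominator $cx+d=nxk^{\mu-1}+\mu|a_{n-\mu}|$ is strictly positive since $nk^{\mu-1}>0$.

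Next I would differentiate by the quotient rule. For a Möbius function the numerator collapses to a constant, namely the determinant of the associated coefficient matrix:
\[
S_\mu'(x)=\frac{a(cx+d)-c(ax+b)}{(cx+d)^2}=\frac{ad-bc}{(cx+d)^2}.
\]
Since the denominator is a positive square, the sign of $S_\mu'(x)$ is exactly the sign of $ad-bc$, so the whole question reduces to evaluating and signing this single quantity.

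Finally, substituting the shorthand gives
\[
ad-bc=n\mu|a_{n-\mu}|k^{2\mu}-n\mu|a_{n-\mu}|k^{2\mu-2}=n\mu|a_{n-\mu}|k^{2\mu-2}\left(k^2-1\right).
\]
Because $k\le 1$ we have $k^2-1\le 0$, while $n\mu|a_{n-\mu}|k^{2\mu-2}\ge 0$ (using $\mu\ge 1$ and $k>0$); hence $ad-bc\le 0$ and therefore $S_\mu'(x)\le 0$ on the whole relevant range. This shows $S_\mu$ is non-increasing. I expect no genuine obstacle here beyond the bookkeeping: the only point needing a word of care is the positivity of the denominator $cx+d$, which legitimises both the differentiation and the passage from the sign of $ad-bc$ to the sign of $S_\mu'(x)$, and this is immediate from $k\le 1$ together with $\mu\ge 1$ and $x\ge 0$.
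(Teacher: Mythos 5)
Your proposal is correct and takes exactly the route the paper indicates: the paper's proof is just the one-line remark that the result ``follows by considering the first derivative test for $S_\mu(x)$,'' and you have carried that test out, correctly reducing the sign of $S_\mu'(x)$ to the sign of $ad-bc=n\mu|a_{n-\mu}|k^{2\mu-2}(k^2-1)\le 0$ for $k\le 1$. No further comment is needed.
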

\begin{proof}
The proof follows by considering the first derivative test for $S_\mu(x).$
\end{proof}
  \begin{lemma}\label{l3}
If $P(z)=a_nz^n+\sum_{\nu=\mu}^{n}a_{n-\nu}z^{n-\nu},$ $1\leq\mu\leq n,$ is a polynomial of degree $n,$ having all its zeros in $|z|\leq k,$ $k\leq 1,$ then for every $\alpha\in\mathbb{C}$ with $|\alpha|\geq k^{\mu},$  
\begin{align}
 |D_\alpha P(z)|\geq \dfrac{|\alpha|mn}{k^n}\qquad\textnormal{for}\qquad|z|=1
\end{align}
where $m=\min_{|z|=k}|P(z)|$.
 \end{lemma}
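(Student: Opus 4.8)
The plan is to reduce the statement to a non-vanishing assertion about the polar derivative of a suitably perturbed polynomial and then feed that into Lemmas \ref{l1} and \ref{l2}. First, if $m=0$ the right-hand side vanishes and there is nothing to prove, so I assume $m>0$; in particular $P$ has no zero on $|z|=k$, hence all $n$ zeros of $P$ lie in $|z|<k$, and by Lemma \ref{Mm'} we have $|a_n|>m/k^n$.

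For a parameter $\beta$ with $|\beta|<1$, set $G_\beta(z)=P(z)-\beta\frac{m}{k^n}z^n$. On $|z|=k$ one has $|\beta\frac{m}{k^n}z^n|=|\beta|m<m\le|P(z)|$, so by Rouch\'{e}'s theorem $G_\beta$ has exactly $n$ zeros in $|z|<k$; since $|a_n|>m/k^n>|\beta|m/k^n$ its leading coefficient $a_n-\beta m/k^n$ is nonzero, so $\deg G_\beta=n$ and all its zeros lie in $|z|<k\le1$. Crucially, the perturbation alters only the top coefficient, so $G_\beta$ retains the gap form $a_n'z^n+\sum_{\nu=\mu}^n a_{n-\nu}z^{n-\nu}$ with the \emph{same} $a_{n-\mu}$; Lemma \ref{l1} therefore applies to $G_\beta$ and yields $S_\mu^{G_\beta}\le k^\mu$. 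A direct computation gives the key identity $D_\alpha G_\beta(z)=D_\alpha P(z)-\beta\frac{m}{k^n}\,n\alpha z^{n-1}$.

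I would then argue by contradiction for $|\alpha|>k^\mu$. Suppose some $z_0$ with $|z_0|=1$ satisfies $|D_\alpha P(z_0)|<|\alpha|nm/k^n$. Choose $\beta_0=\dfrac{D_\alpha P(z_0)}{n\alpha(m/k^n)z_0^{n-1}}$, so that $|\beta_0|=\dfrac{|D_\alpha P(z_0)|}{n|\alpha|m/k^n}<1$ and the identity forces $D_\alpha G_{\beta_0}(z_0)=0$. On the other hand $G_{\beta_0}$ has all its zeros in $|z|<k\le1$, so $G_{\beta_0}(z_0)\neq0$, and since $|\alpha|>k^\mu\ge S_\mu^{G_{\beta_0}}$, Lemma \ref{l2} applied to $G_{\beta_0}$ gives $|D_\alpha G_{\beta_0}(z_0)|\ge n\frac{|\alpha|-S_\mu^{G_{\beta_0}}}{1+k^\mu}|G_{\beta_0}(z_0)|>0$, a contradiction. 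Hence the asserted inequality holds whenever $|\alpha|>k^\mu$, and the boundary case $|\alpha|=k^\mu$ follows by letting $|\alpha|\downarrow k^\mu$ along a ray, both sides being continuous in $\alpha$.

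The step I expect to require the most care is verifying that $G_\beta$ genuinely satisfies the hypotheses of Lemmas \ref{l1} and \ref{l2}: one must note that the perturbation touches only the leading coefficient, so the $\mu$-gap and the coefficient $a_{n-\mu}$ are preserved, and that the assumption $m>0$ is exactly what guarantees $P\neq0$ on $|z|=k$ and hence the \emph{strict} Rouch\'{e} inequality placing all zeros of $G_\beta$ inside $|z|<k\le1$. Securing the strict positivity of $|D_\alpha G_{\beta_0}(z_0)|$ is also what forces the preliminary reduction to $|\alpha|>k^\mu$, with equality recovered by continuity.
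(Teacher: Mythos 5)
Your argument is correct, but it reaches the conclusion by a genuinely different route than the paper. You share the opening move — perturb to $G_\beta(z)=P(z)-\beta m z^n/k^n$ and use Rouch\'e to keep all zeros in $|z|<k$ — but from there the paper works directly with the conjugate polynomial $Q(z)=z^n\overline{P(1/\overline{z})}$: it applies Lemma \ref{l1} to the perturbed polynomial, invokes Gauss--Lucas to get the pointwise lower bound $|P^{\prime}(z)|\geq mn|z|^{n-1}/k^n$ for $|z|\geq k$ (inequality \eqref{lp2}), uses that bound to justify choosing the argument of the perturbation parameter so the triangle inequality becomes an equality, and thereby derives the sharpened estimate $k^\mu|P^{\prime}(z)|-mn/k^{n-\mu}\geq |Q^{\prime}(z)|$ and the intermediate inequality \eqref{r1}, $|D_\alpha P(z)|\geq(|\alpha|-k^\mu)|P^{\prime}(z)|+mn/k^{n-\mu}$, from which the lemma follows. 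You instead argue by contradiction: if $|D_\alpha P(z_0)|$ were too small at some $z_0$ on the unit circle, an explicit choice of $\beta_0$ with $|\beta_0|<1$ would force $D_\alpha G_{\beta_0}(z_0)=0$, contradicting Lemma \ref{l2} applied to $G_{\beta_0}$ (whose hypotheses you correctly verify: the gap structure and $a_{n-\mu}$ are preserved, $S_\mu^{G_{\beta_0}}\leq k^\mu<|\alpha|$, and $G_{\beta_0}(z_0)\neq 0$ since its zeros lie in $|z|<k\leq 1$). Your route buys economy — it needs neither Gauss--Lucas nor the argument-selection step, and the choice of $\beta_0$ is completely explicit rather than an existence claim — and it structurally mirrors the proof of the main theorem. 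What it gives up is the reusable intermediate inequality \eqref{r1} (a lower bound in terms of $|P^{\prime}(z)|$, stronger than the lemma's statement), and it requires the small extra limiting argument to recover the boundary case $|\alpha|=k^\mu$, which the paper's direct computation handles uniformly. Both proofs are sound.
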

\begin{proof}
By hypothesis all the zeros of polynomial $P(z)=a_nz^n+\sum_{\nu=\mu}^{n}a_{\nu-\mu}z^{\nu-\mu},$ $1\leq\mu\leq n,$ lie in $|z|\leq k,$ $k\leq 1.$ If $P(z)$ has a zero on $|z|=k$ then $m=0$ and the result holds trivially. Hence, we suppose that all the zeros of $P(z)$ lie in $|z|< k,$ $k\leq 1,$ so that $m> 0$ and $m\leq |P(z)|$ for $|z|=k,$ mow if $\lambda$ is any complex number such that $|\lambda|<1,$ then
\begin{equation*}
\left|\dfrac{m\lambda z^n}{k^n}\right|<|P(z)|\qquad \textnormal{for}\qquad |z|=k.
\end{equation*}  
Since all the zeros of $P(z)$ lie in $|z|< k,$ it follows by Rouche's theorem that all the zeros of $$F(z)=P(z)-\dfrac{m\lambda z^n}{k^n}$$ also lie in $|z|<k, k\leq 1.$ Applying Lemma \ref{l1} to the polynomial $F(z),$ we get 
\begin{equation*}
 k^\mu|F^{\prime}(z)|\geq |G^{\prime}(z)| \qquad \textnormal{for}\qquad |z|=1.
\end{equation*}
where $G(z)=z^n\overline{F(1/\overline{z})}=z^n\overline{P(1/\overline{z})}+\frac{m\overline{\lambda}}{k^n}=Q(z)++\frac{m\overline{\lambda}}{k^n}.$ 
Equivalently for $|z|=1,$ we have
\begin{align}\label{lp1}
 k^\mu\left|P^{\prime}(z)-\dfrac{mn\lambda z^{n-1}}{k^n}\right|\geq |Q^{\prime}(z)|.
\end{align}
Moreover, by Gauss-Lucas theorem that all the zeros of polynomial $F^{\prime}=P^{\prime}(z)-mn\lambda z^{n-1}/k^n$ lie in $|z|<k,$ $k\leq 1$ for every $\lambda$ with $|\lambda|<1.$ This implies 
\begin{align}\label{lp2}
|P^{\prime}(z)|\geq mn|z|^{n-1}/k^n\,\,\,\,\textnormal{for}\,\,\,\,|z|\geq k.
\end{align}
Now, choosing the argument of $\lambda$ in the left hand side of \eqref{lp1} such that 
\begin{align*}
k^\mu\left|P^{\prime}(z)-\dfrac{mn\lambda z^{n-1}}{k^n}\right|=k^\mu\left\{|P^{\prime}(z)|-\dfrac{|\lambda|mn}{k^n}     \right\}\,\,\,\,\textnormal{for}\,\,\,\,\,|z|=1,
\end{align*}
which is possible by \eqref{lp2}, we get
\begin{align*}
k^\mu|P^{\prime}(z)|-\dfrac{|\lambda|mn}{k^{n-\mu}}\geq |Q^{\prime}(z)|\,\,\,\,\,\,\,\textnormal{for}\,\,\,\,|z|=1.
\end{align*}
Letting $|\lambda|\rightarrow 1,$ we obtain 
\begin{align}\label{lp3}
k^\mu|P^{\prime}(z)|-\dfrac{mn}{k^{n-\mu}}\geq |Q^{\prime}(z)|\,\,\,\,\,\,\,\textnormal{for}\,\,\,\,|z|=1.
\end{align}
Again, for $|z|=1$ and $\alpha\in\mathbb{C},$ we have
 \begin{align*}
 |D_\alpha P(z)|&=|nP(z)+(\alpha-z)P^{\prime}(z)|\\\nonumber&\geq |\alpha||P^{\prime}(z)|-|nP(z)-zP^{\prime}(z)|\\&=|\alpha||P^{\prime}(z)|-|Q^{\prime}(z)|
 \end{align*}
 Combining this with inequality \eqref{lp3}, we obtain for $|z|=1$ and $\alpha\in\mathbb{C}$ with $|\alpha|\geq k^\mu,$ 
 \begin{align}\label{r1}
 |D_\alpha P(z)|\geq (|\alpha|-k^\mu)|P^{\prime}(z)|+k^\mu\dfrac{mn}{k^{n}}
 \end{align}
 Inequality \eqref{r1} in conjunction with \eqref{lp2} gives for $|z|=1$ and $|\alpha|\geq k^\mu,$
 \begin{align}\label{r2}\nonumber
 |D_\alpha P(z)|&\geq (|\alpha|-k^\mu)\dfrac{mn}{k^n}+k^\mu\dfrac{mn}{k^{n}}\\&=\dfrac{|\alpha|mn}{k^n}.
 \end{align}
 This completes the proof of Lemma \ref{l3}.

\end{proof} 
The next lemma is due to Gardner, Govil and Weems \cite{ggw}.
\begin{lemma}\label{lggw}
If $P(z)=a_0+\sum_{j=\mu}^{n}a_{j}z^{j},$ $1\leq \mu\leq n,$ is a polynomial of degree $n$ having no zero in $|z|\leq k,$ $k\geq 1,$ then 
\begin{align}
 s_0\geq k^\mu 
\end{align}
where $$ s_0=k^{\mu+1}\left\{\frac{\left(\frac{\mu}{n}\right)\dfrac{|a_\mu|}{|a_0|-m}k^{\mu-1}+1}{\left(\frac{\mu}{n}\right)\frac{|a_\mu|}{|a_0|-m}k^{\mu+1}+1}\right\} .$$
\end{lemma}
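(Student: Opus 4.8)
The plan is to reduce the assertion $s_0\ge k^\mu$ to a single coefficient inequality and then to establish that inequality using the reciprocal polynomial $z^n\overline{P(1/\bar z)}$ together with the coefficient bound already recorded in Lemma \ref{l1}. Throughout, $m=\min_{|z|=k}|P(z)|$, and since $P$ has no zero in $|z|\le k$ I note at the outset that $m>0$ and, by the minimum modulus principle (the minimum of $|P|$ on $|z|\le k$ is attained on $|z|=k$ and cannot be attained at the interior point $0$ unless $P$ is constant), $|a_0|=|P(0)|>m$. Hence, writing $t=\frac{\mu}{n}\frac{|a_\mu|}{|a_0|-m}\ge 0$, the quantity $s_0$ is well defined and equals $k^{\mu+1}\dfrac{tk^{\mu-1}+1}{tk^{\mu+1}+1}$.

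First I would carry out the elementary reduction. Since $tk^{\mu+1}+1>0$, the inequality $s_0\ge k^\mu$ is equivalent to $k\bigl(tk^{\mu-1}+1\bigr)\ge tk^{\mu+1}+1$, that is, to $tk^{\mu}(1-k)\ge 1-k$. For $k=1$ both sides vanish and there is nothing to prove; for $k>1$ we have $1-k<0$, so dividing by $1-k$ reverses the inequality and the claim becomes $tk^{\mu}\le 1$, i.e.
\begin{equation*}
\frac{\mu}{n}\,|a_\mu|\,k^{\mu}\le |a_0|-m.
\end{equation*}
Thus the whole lemma rests on this single coefficient estimate.

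Next I would prove the estimate. The key tool is the coefficient bound from Lemma \ref{l1}: any polynomial $c_0z^n+\sum_{j=\mu}^{n}c_{n-j}z^{n-j}$ whose zeros all lie in $|z|\le \kappa$ with $\kappa\le 1$ satisfies $\frac{\mu}{n}|c_{n-\mu}/c_0|\le \kappa^\mu$. To invoke it in the present ``no zeros'' setting, fix $\lambda$ with $|\lambda|<1$ and $\arg\lambda=\arg a_0$, and set $H(z)=P(z)-m\lambda$. On $|z|=k$ one has $|m\lambda|=m|\lambda|<m\le|P(z)|$, so by Rouch\'e's theorem $H$ has, like $P$, no zeros in $|z|<k$; and $H$ cannot vanish on $|z|=k$ either, so $H$ has no zero in $|z|\le k$. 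Consequently the reciprocal polynomial $\widetilde Q(z)=z^n\overline{H(1/\bar z)}=\overline{(a_0-m\lambda)}\,z^n+\overline{a_\mu}\,z^{n-\mu}+\cdots+\overline{a_n}$ has all its zeros in $|z|\le 1/k\le 1$ and exhibits exactly the gap structure required by Lemma \ref{l1} (the powers $z^{n-1},\dots,z^{n-\mu+1}$ are absent). Applying the coefficient bound to $\widetilde Q$ with $\kappa=1/k$ gives $\frac{\mu}{n}|a_\mu|/|a_0-m\lambda|\le k^{-\mu}$, i.e. $\frac{\mu}{n}|a_\mu|k^\mu\le|a_0-m\lambda|$. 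Because $\arg\lambda=\arg a_0$ and $|a_0|>m\ge m|\lambda|$, we have $|a_0-m\lambda|=|a_0|-m|\lambda|$, so letting $|\lambda|\to1$ yields $\frac{\mu}{n}|a_\mu|k^\mu\le|a_0|-m$, which completes the proof.

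The step I expect to be the main obstacle is the incorporation of the minimum modulus $m$ --- passing from the ``plain'' bound $\frac{\mu}{n}|a_\mu|k^\mu\le|a_0|$ (which follows at once by applying Lemma \ref{l1} to $z^n\overline{P(1/\bar z)}$) to the sharpened bound with $|a_0|-m$ on the right. The Rouch\'e-plus-optimal-argument device above is what achieves this, and the two points requiring care are that subtracting the constant $m\lambda$ genuinely preserves the ``no zero in $|z|\le k$'' property (including the boundary) and that the gap pattern of the coefficients survives the reciprocal transformation so that Lemma \ref{l1} applies verbatim.
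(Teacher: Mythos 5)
Your proof is correct; note, however, that the paper does not prove this lemma at all --- it is simply quoted from Gardner, Govil and Weems \cite{ggw} --- so there is no internal argument to compare against, and what you have written is a genuine addition. Your algebraic reduction of $s_0\ge k^\mu$ to the single coefficient inequality $\tfrac{\mu}{n}|a_\mu|k^\mu\le |a_0|-m$ is a correct equivalence (the case $k=1$ being trivial, and the sign reversal on dividing by $1-k<0$ handled properly), and the preliminary facts $m>0$ and $|a_0|>m$ via the minimum modulus principle are exactly what is needed for $s_0$ to be well defined. The derivation of the coefficient inequality is also sound: perturbing the constant term to $a_0-m\lambda$ with $\arg\lambda=\arg a_0$ preserves the zero-free closed disc $|z|\le k$ by Rouch\'e, the reciprocal polynomial $z^n\overline{H(1/\bar z)}$ has degree $n$ (since $|a_0-m\lambda|=|a_0|-m|\lambda|>0$), retains the lacunary form $c_nz^n+\sum_{j=\mu}^{n}c_{n-j}z^{n-j}$, and has all zeros in $|z|<1/k\le 1$, so the coefficient bound of Lemma \ref{l1} applies with parameter $1/k$ and gives $\tfrac{\mu}{n}|a_\mu|k^\mu\le|a_0|-m|\lambda|$; letting $|\lambda|\to 1$ finishes it. In effect you have made the paper self-contained on this point using only Lemma \ref{l1} and the same Rouch\'e-plus-extremal-argument device the paper itself employs in Lemma \ref{l3} and in the proof of the main theorem, which is presumably close in spirit to the original Gardner--Govil--Weems argument.
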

The following Lemma can be easily verified by applying Lemma \ref{lggw} to $Q(z)=z^n\overline{P(1/\overline{z})}$ where $P(z)=a_nz^n+\sum_{j=\mu}^{n}a_{n-j}z^{n-j},$ $1\leq \mu\leq n.$
\begin{lemma}\label{l4}
If $P(z)=a_nz^n+\sum_{j=\mu}^{n}a_{n-j}z^{n-j},$ $1\leq \mu\leq n,$ is a polynomial of degree $n$ having all its zeros in $|z|\leq k,$ $k\leq 1,$ then \\
\begin{equation}\label{lm7}
 A_\mu\leq k^\mu
\end{equation}
where $A_\mu$ is defined in \eqref{amu}.
\end{lemma}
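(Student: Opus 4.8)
The plan is to obtain Lemma \ref{l4} as a direct transcription of the Gardner--Govil--Weems bound of Lemma \ref{lggw}, transported from the disk $|z|\le k$ to the exterior disk $|z|\ge 1/k$ by means of the conjugate-reciprocal polynomial. First I would set $Q(z)=z^n\overline{P(1/\overline z)}$ and expand it as $Q(z)=\overline{a_n}+\overline{a_{n-\mu}}\,z^{\mu}+\cdots+\overline{a_0}\,z^n$. The key observation is that the coefficients of $z,\dots,z^{\mu-1}$ in $Q$ vanish precisely because the corresponding coefficients of $P$ vanish, so $Q$ has exactly the gap structure required in Lemma \ref{lggw}, with constant term $\overline{a_n}$ and first nonzero coefficient $\overline{a_{n-\mu}}$ occurring at degree $\mu$. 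Since all zeros of $P$ lie in $|z|\le k\le 1$, the polynomial $Q$ has no zero in $|z|<1/k$, where $1/k\ge 1$, so Lemma \ref{lggw} applies to $Q$ on the disk of radius $1/k$.

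Next I would identify the minimum modulus that enters the Gardner--Govil--Weems expression. The change of variable $w=1/\overline z$ gives $\min_{|z|=1/k}|Q(z)|=k^{-n}\min_{|w|=k}|P(w)|=m/k^n$, and Lemma \ref{Mm'} supplies the strict inequality $|a_n|-m/k^n>0$, which guarantees that the denominator occurring in $s_0$ is positive and the whole quantity is legitimate. Applying Lemma \ref{lggw} to $Q$ with $|a_0|$ replaced by $|a_n|$, $|a_\mu|$ replaced by $|a_{n-\mu}|$, radius $1/k$, and minimum modulus $m/k^n$, I then read off
\[
s_0=\Big(\tfrac1k\Big)^{\mu+1}\frac{c\,(1/k)^{\mu-1}+1}{c\,(1/k)^{\mu+1}+1}\ \ge\ \Big(\tfrac1k\Big)^{\mu},
\qquad c:=\frac{\mu|a_{n-\mu}|}{n\,(|a_n|-m/k^n)}.
\]

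The remaining work is purely algebraic and symmetric on the two sides. Multiplying the inequality $s_0\ge k^{-\mu}$ through by the positive quantity $k^{\mu+1}$ and clearing the remaining positive denominator $c+k^{\mu+1}$, it collapses to $c\,k(k-1)\ge k^{\mu+1}(k-1)$; for $k<1$ one has $k-1<0$, so dividing reverses the sign and yields exactly $c\le k^{\mu}$. On the other side, dividing the numerator and denominator of \eqref{amu} by $n(|a_n|-m/k^n)>0$ presents $A_\mu$ in the compact form
\[
A_\mu=\frac{k^{2\mu}+c\,k^{\mu-1}}{k^{\mu-1}+c},
\]
and the target inequality $A_\mu\le k^{\mu}$ reduces, after cross-multiplying and again using $k-1<0$, to the very same condition $c\le k^{\mu}$. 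Hence for $k<1$ the bound $A_\mu\le k^{\mu}$ is literally equivalent to the conclusion of Lemma \ref{lggw}, while for $k=1$ both sides give $A_\mu=1=k^\mu$ directly, so the lemma holds in all cases.

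I expect the only genuinely delicate points to be bookkeeping rather than mathematics: correctly reading off which coefficient of $Q$ plays the role of $a_0$ and which plays the role of $a_\mu$ in Lemma \ref{lggw}, and, most importantly, tracking the two sign reversals that occur when dividing by $k-1<0$. Reversing either direction by mistake would flip the inequality, so I would verify the computation against the boundary case $k=1$, where $s_0=k^{-\mu}$ and $A_\mu=k^{\mu}$ both hold with equality, as a built-in consistency check.
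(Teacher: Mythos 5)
Your proposal is correct and follows exactly the route the paper indicates: applying Lemma \ref{lggw} to $Q(z)=z^n\overline{P(1/\overline{z})}$ on the disk of radius $1/k$, with $\min_{|z|=1/k}|Q(z)|=m/k^n$ and $|a_n|-m/k^n>0$ supplied by Lemma \ref{Mm'}. The paper leaves this entirely as an ``easily verified'' remark, and your algebra reducing both $s_0\geq k^{-\mu}$ and $A_\mu\leq k^\mu$ to the common condition $c\leq k^\mu$ (with the sign reversals from $k-1<0$ handled correctly, and the $k=1$ case checked) fills in the omitted details accurately.
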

 \section{Proof of theorem}
 \begin{proof}[\textnormal{\textbf{Proof of theorem \ref{t1}}}]
Proceeding similarly as in the proof of lemma \ref{l3}, the polynomial  $ P(z)-\frac{m\lambda z^n}{k^n} $ has all its zeros in $|z|\leq k,\,k\leq 1,$ for every real or complex $\lambda$ with $|\lambda|<1.$ Applying Lemma \ref{l2} to the polynomial $ P(z)-\frac{m\lambda z^n}{k^n}, $ we obtain for $|\alpha|\geq S^{\prime}_\mu,$
\begin{equation*}
\Big|D_\alpha\Big\{P(z)-\frac{m\lambda z^n}{k^n}\Big\}\Big|\geq n\left(\frac{|\alpha|-S^{\prime}_\mu}{1+k^\mu}\right)\left|P(z)-\frac{m\lambda z^n}{k^n}\right|\quad\textnormal{for}\quad|z|=1
\end{equation*}
where
\begin{equation}\label{p2}
  S^{\prime}_\mu=\frac{n\left|a_n-\frac{m\lambda}{k^n}\right|k^{2\mu}+\mu|a_{n-\mu}|k^{\mu-1}}{n\left|a_n-\frac{m\lambda}{k^n}\right|k^{\mu-1}+\mu|a_{n-\mu}|}.
  \end{equation}
That is,
\begin{equation}\label{p1}
\Big|D_\alpha P(z)-\frac{\lambda mn \alpha z^{n-1}}{k^n}\Big|\geq n\left(\frac{|\alpha|-S^{\prime}_\mu}{1+k^\mu}\right)\left|P(z)-\frac{m\lambda z^n}{k^n}\right|\quad\textnormal{for}\quad|z|=1
\end{equation}
  Since for every $\lambda$ with $|\lambda|<1$, we have by inequality \eqref{Mm''} of Lemma \ref{Mm},
   \begin{equation}\label{p3}
   \left|a_n-\dfrac{m\lambda}{k^n}\right|\geq|a_n|-\dfrac{m|\lambda|}{k^n}\geq|a_n|-\dfrac{m}{k^n}
   \end{equation}
   therefore, it follows by Lemma \ref{l5} that for every $\lambda$ with $|\lambda|<1,$ 
   \begin{align}\label{p4}\nonumber
   S^{\prime}_\mu=&\frac{n\left|a_n-\frac{m\lambda}{k^n}\right|k^{2\mu}+\mu|a_{n-\mu}|k^{\mu-1}}{n\left|a_n-\frac{m\lambda}{k^n}\right|k^{\mu-1}+\mu|a_{n-\mu}|}\\\leq&\frac{n\left(|a_n|-\frac{m}{k^n}\right)k^{2\mu}+\mu|a_{n-\mu}|k^{\mu-1}}{n\left(|a_n|-\frac{m}{k^n}\right)k^{\mu-1}+\mu|a_{n-\mu}|}=A_\mu 
     \end{align} 
     Using \eqref{p4} in \eqref{p1}, we obtain for $|\alpha|\geq A_\mu(\geq S^{\prime}_{\mu}),$
     \begin{equation}\label{p}
     \Big|D_\alpha P(z)-\frac{\lambda mn \alpha z^{n-1}}{k^n}\Big|\geq n\left(\frac{|\alpha|-A_\mu}{1+k^\mu}\right)\left|P(z)-\frac{m\lambda z^n}{k^n}\right|\quad\textnormal{for}\quad|z|=1.
     \end{equation}
    Now, choosing the argument of $\lambda$ in the right hand side of inequality \eqref{p} such that
  \begin{align}\label{p5}
  \Big|D_\alpha P(z)-\frac{\lambda mn \alpha z^{n-1}}{k^n}\Big|=|D_\alpha P(z)|-\frac{|\lambda| |\alpha|mn z^{n-1}}{k^n}
  \end{align} 
  which is possible by Lemma \ref{l3}, we obtain by using Lemma \ref{l4} for $|\alpha|\geq k^\mu(\geq A_\mu)$ and $|z|=1,$
  \begin{align*}
|D_\alpha P(z)|-\frac{|\lambda| |\alpha|mn |z|^{n-1}}{k^n}\geq n\left(\frac{|\alpha|-A_\mu}{1+k^\mu}\right)\left\{|P(z)|-\frac{|\lambda|m |z|^n}{k^n}\right\},
  \end{align*} 
equivalently,
  \begin{align}\label{p6}
|D_\alpha P(z)|\geq n\left(\frac{|\alpha|-A_\mu}{1+k^\mu}\right)|P(z)|+|\lambda|\dfrac{mn}{k^n}\left(\dfrac{|\alpha|k^\mu+A_\mu}{1+k^\mu}\right).
  \end{align}
  Letting $|\lambda|\rightarrow 1,$ in \eqref{p6}, we get
  \begin{align*}
  |D_\alpha P(z)|\geq n\left(\frac{|\alpha|-A_\mu}{1+k^\mu}\right)|P(z)|+\dfrac{mn}{k^n}\left(\dfrac{|\alpha|k^\mu+A_\mu}{1+k^\mu}\right)\quad\textnormal{for}\quad|z|=1,
  \end{align*}
  which implies for every $\alpha\in\mathbb{C}$ with $|\alpha|\geq k^\mu,$ $1\leq \mu\leq n,$
  \begin{align*}
  \underset{\left|z\right|=1}{\max}\left|D_\alpha P(z)\right|\geq &       \frac{n\left(|\alpha|-k^\mu\right)}{1+k^\mu}\underset{\left|z\right|=1}{\max}\left|P(z)\right|-\frac{n\left(|\alpha|+1\right)}{k^{n-\mu}\left(1+k^\mu\right)}m\\
   & +n\left(\dfrac{k^\mu-A_\mu}{1+k^\mu}\right)\underset{\left|z\right|=1}{\max}|P(z)|+\dfrac{n(A_\mu-k^\mu)}{k^n(1+k^\mu)}m.
  \end{align*}
  This completes the proof of Theorem \ref{t1}.
 \end{proof}

\end{document}